\def\ov#1{\overline{#1}}
\def\ti#1{\tilde{#1}}
\newcommand{\field}[1]{\mathbb{#1}}
\newcommand{\C}{\field{C}}
\newcommand{\PP}{\field{P}}
\DeclareMathOperator{\Sing}{Sing}
\DeclareMathOperator{\Reg}{Reg}
\DeclareMathOperator{\GSDS}{GSDS}
\DeclareMathOperator{\SDS}{SDS}
\DeclareMathOperator{\Lin}{Lin}
\newtheorem{definition}{Definition}[section]
\newtheorem{lemma}[definition]{Lemma}
\newtheorem{theorem}[definition]{Theorem}
\newtheorem{corollary}[definition]{Corollary}
\newtheorem{proposition}[definition]{Proposition}
\newtheorem{remark}[definition]{Remark}
\title [Generic symmetry defect set of an algebraic  curve]{Generic symmetry defect set of an algebraic curve} \makeatletter
\thanks{}
\date {\today}
\author{L.R.G. Dias \& M. Farnik \& Z. Jelonek}
\address[L.R.G. Dias] {Faculdade de Matem\'atica, Universidade Federal de Uberl\^andia, Av. Jo\~ao Naves de \'Avila 2121, 1F-153 - CEP: 38408-100, Uberl\^andia, Brasil}
\email{lrgdias@ufu.br}
\address[M. Farnik]{Jagiellonian University\\
Faculty of Mathematics and Computer Science\\
{\L}ojasie\-wi\-cza~6, 30-348 Krak\'ow, Poland}
\email{michal.farnik@gmail.com}
\address[Z. Jelonek]{Instytut Matematyczny\\
	Polska Akademia Nauk\\
	\'Sniadeckich 8, 00-956 Warszawa, Poland}
\email{najelone@cyf-kr.edu.pl}
\keywords{symmetric defect, bifurcation points, Wigner caustic.}
\subjclass[2020]{14 D 06, 14 Q 20}
\begin{document}

\begin{abstract} Let $X \subset \C^{2n}$ be an  $n$-dimensional algebraic variety. We define the algebraic version of the generic symmetry defect set (Wigner caustic) of $X$. Moreover, we compute its singularities for $X_d$ being a generic curve of degree $d$  in~$\C^2$.
\end{abstract}

\maketitle

\section{Introduction}

Over the last two decades numerous methods have been developed to
study affine geometry of surfaces and curves, especially their
affinely invariant symmetry characteristics. The symmetry sets
\cite{Bruce, Damon, Damon1} and the center symmetry sets were
investigated extensively in \cite{Jan, GH, GJ, DP}. Several constructions of
the set equivalent to the point of central symmetry for perturbed
centrally symmetric ovals were presented in the literature and
resulted in the kind of symmetry defect called center symmetry set.
The center symmetry set directly appears in the construction of the
so-called Wigner caustic. This caustic is obtained by the stationary
phase method applied to the semiclassical Wigner function which
completely describes a quantum state in the symplectic phase space
\cite{berry}. It is {built} of points where the central symmetry, i.e.
the number of intervals ending in the surface and passing centrally
through that point, changes. We call this set a symmetry defect or
bifurcation set. In \cite{jjr}   this construction was generalized for algebraic
varieties $Z^n\subset \C ^{2n}$.

This note  is motivated by the recent results of \cite{jjr}. We address the problem of how to introduce a {\it  generic symmetry defect set} ($\GSDS$) associated to $n$-dimensional variety $X\subset \C^{2n}$. Note that in the algebraic case we cannot use a general deformation of $X$ as it is done in the smooth case. To avoid this difficulty we will consider the linear deformation of a pair $(X,X)$ and we will study the symmetry defect set of this pair. We will show that the generic symmetry defect set is an irreducible algebraic hypersurface with nice singularities, which is defined up to the ambient homeomorphism of $\C^{2n}$.

We also show:

\vspace{2mm}

\noindent {\bf Theorem \ref{th_GSDSXY}} {\it Let $X,Y\subset \C^2$ be generic plane curves of degree $d_1$ and $d_2$, respectively, with $d_1,d_2\geq 2$. Then $C'=\GSDS(X,Y)$ is an irreducible curve with 
$$c=12{d_1 \choose 2}{d_2 \choose 2}$$
cusps and 
$$n=2{d_1 \choose 2}{d_2 \choose 2}[(d_1+d_2)^2-d_1-d_2-10]$$
nodes.
It has degree, genus and Euler characteristic equal respectively to
\begin{align*}
\deg(C')&=d_1d_2(d_1+d_2-2),\\
g(C')&=d_1d_2(2d_1d_2-3(d_1+d_2)+4)+1,\\
\chi(C')&=-d_1d_2(4d_1d_2-5(d_1+d_2)+6).
\end{align*}
}

This yields as a consequence

\noindent {\bf Corollary \ref{cor_GSDSX}} {\it Let $X\subset \C^2$ be a generic plane curve of degree $d\geq 2$. Then $\GSDS(X)$ is an irreducible curve of degree $2d^2(d-1)$ and genus $g=2d^2(d^2-3d+2)+1$ with $c=12{d \choose 2}^2$
cusps and $n=4{d \choose 2}^2[2d^2-d-5]$ nodes. Moreover, $\chi(\GSDS(X))=-d^2(4d^2-10d+6)$.
}

\section{Symmetry defect set}

Let $X^n\subset \C^{2n}$ be a smooth manifold of dimension $n$. For a given point $a\in \C^{2n}$ we are interested in the number $\mu(a)$ of pairs of points $x,y\in X$ such that $a$ is the center of the interval $\overline{xy}$, $a=\frac{x+y}{2}$ (mid point map).
We have showed in \cite{jjr} that if $X$ is an algebraic manifold in a general
position, then there is a closed set $B\subset \C^{2n},$ such that the function $\Phi\colon X\times X\ni (x,y)\mapsto (x+y)/2\in \C^{2n}$ is a differentiable covering outside $B$. We call the minimal such a set $B=B(X)$ the \emph{symmetry defect set} of $X$ and denote it by $\SDS(X)$.
We have showed that the symmetry defect set is an algebraic hypersurface and
consequently the function $\mu$ is constant and positive outside $\SDS(X)$. We have estimated the number $\mu$ and the degree of the hypersurface $\SDS(X).$ 

In fact we can do this in a more general setting, we can start from a pair  $X^r, Y^s\subset \C^n$ of smooth manifolds of dimensions $r$ and $s$
respectively, where $r+s=n$. Again, if $X, Y$  are algebraic manifolds in a general position, then there is a closed  set $B\subset
\C^n$ such that the function $\Phi\colon X\times Y\ni (x,y)\mapsto (x+y)/2\in \C^n$ is a differentiable covering outside $B$. The minimal such a set is an algebraic hypersurface which we call the \emph{symmetry defect set} of $X$ and $Y$ and denote it by $\SDS(X,Y)$.

However, in general the hypersurface $\SDS(X)$ (or $\SDS(X,Y)$) has bad singularities. 
In this paper we will introduce the stable (generic) versions of $\SDS(X)$ and $\SDS(X,Y)$. To do this we cannot use general deformation of $X$ as in the smooth case. In fact we can use here only linear deformations, but this is not enough to obtain a stable $\SDS$. The good solution is to consider the pair $(X,X)$ and to deform this pair. 

\begin{definition}
Let $M$ be a smooth manifold. We will say that the hypersurface $X\subset M$ has Thom-Boardman singularities, if there is a smooth manifold $N$ and a Thom-Boardman mapping $F\colon N\to M$ such that  $X$ is a discriminant of $F$.
\end{definition}

We have:

\begin{theorem}
Let $X^r,Y^s\subset\C^{2n}$ (where $r+s=2n$) be smooth algebraic manifolds and let $\Lin(2n,2n)$ denote the group of affine linear automorphisms of $\C^{2n}$. There is a Zariski open subset $U\subset \Lin(2n,2n)\times \Lin(2n,2n)$ such that for every $G,H\in U$ the hypersurface $\SDS(G(X),H(Y))$ has only Thom-Boardman singularities. Moreover, if $(G,H)\in U$ and $(G_1,H_1)\in U$ then the hypersurfaces $\SDS(G(X),H(Y))$  and $\SDS(G_1(X),H_1(Y))$ are ambient homeomorphic.
\end{theorem}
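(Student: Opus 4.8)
The plan is to realize the map $\Phi\colon X \times Y \to \C^{2n}$, $(x,y)\mapsto (x+y)/2$, as (a restriction of) a member of a large algebraic family of maps parametrized by $\Lin(2n,2n)\times\Lin(2n,2n)$, and then apply a Thom-type transversality / genericity theorem to conclude that for parameters outside a proper Zariski-closed set the relevant multijet maps are transverse to the Thom-Boardman strata, so that the discriminant $\SDS(G(X),H(Y))$ inherits only Thom-Boardman singularities. Concretely, for $(G,H)\in\Lin(2n,2n)\times\Lin(2n,2n)$ consider
\[
\Phi_{G,H}\colon X\times Y\ni (x,y)\longmapsto \frac{G(x)+H(y)}{2}\in\C^{2n},
\]
whose image of critical values (together with the branch locus coming from the multiple-point structure, i.e. the non-properness/ramification set described in the definition of $\SDS$) is exactly $\SDS(G(X),H(Y))$. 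First I would record that the total family $X\times Y\times \Lin\times\Lin \to \C^{2n}\times\Lin\times\Lin$ is a submersion in the $\Lin$-directions at every point where it needs to be — this is where the affine group acting by composition on the target gives enough freedom, much as in the classical proof that a generic linear projection of a smooth variety has only ordinary singularities. Then the parametric transversality theorem (openness of transversality in families, plus the fact that the Thom-Boardman strata $\Sigma^I$ in the relevant jet bundle form a finite algebraic stratification) yields a Zariski-open dense $U$ for which $j^k\Phi_{G,H}$ (and its iterated multijet analogues, to handle self-intersections of the discriminant) is transverse to every stratum; for such $(G,H)$ the singularities of $\Phi_{G,H}$, and hence of its discriminant, are Thom-Boardman by construction, so $\SDS(G(X),H(Y))$ has only Thom-Boardman singularities in the sense of the Definition.

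For the second assertion — that any two such discriminants are ambient homeomorphic — I would invoke a Thom-type first isotopy argument over the parameter space. Restrict to the connected (in the strong, i.e. Zariski, sense irreducibility implies path-connectedness in the Euclidean topology) open set $U$; the family $\{\SDS(G(X),H(Y))\}_{(G,H)\in U}$ is an algebraic family of hypersurfaces in $\C^{2n}$, and after possibly shrinking $U$ to a smaller Zariski-open dense subset $U'$ the family is Whitney-equisingular along $U'$ (constancy of the Thom-Boardman type is exactly what prevents the combinatorics of the stratification from jumping, so Verdier/Thom $(a_f)$ conditions hold generically along $U'$). Thom's first isotopy lemma then gives local topological triviality of the pair $(\C^{2n}\times U', \bigcup \SDS)$ over $U'$, hence all fibers over $U'$ are ambient homeomorphic; finally one checks that $U'$ can be taken to be all of $U$ — either because the complement $U\setminus U'$ has real codimension $\geq 2$ so does not disconnect $U$ and one spreads the homeomorphism along paths, or by directly arguing that equisingularity already holds on $U$.

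The main obstacle I expect is the first step: verifying that composing with the affine group really does produce enough transversality to force \emph{Thom-Boardman} singularities, not merely generic ones. A single linear reparametrization of the target is a very restricted class of deformations (this is precisely the difficulty the authors flag when they say ``we cannot use a general deformation of $X$ as in the smooth case''), and the whole point of deforming the \emph{pair} $(X,X)$ to $(G(X),H(Y))$ — with two independent copies of $\Lin$ — is to recover enough parameters. I would need to check carefully that the induced infinitesimal variations of $\Phi_{G,H}$ at a point $(x,y)$ span enough of the jet space transverse to each $\Sigma^I$; the key computation is that the derivative of $(G,H)\mapsto G(x)+H(y)$ together with its effect on higher jets of $\Phi$ along $X\times Y$ surjects onto the normal directions of the strata, which should follow from the fact that $G$ and $H$ can be chosen independently and that affine maps already act transitively on $1$-jets. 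A secondary technical point is handling the self-intersection locus of the discriminant (nodes, and the more degenerate multiple points) via multijet transversality rather than ordinary jet transversality, and ensuring the multijet version of the parametric transversality argument goes through over the same Zariski-open $U$ (possibly after intersecting finitely many such open sets).
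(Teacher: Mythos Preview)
Your overall strategy is sound, but you are working harder than necessary and you leave the central step unfinished. The paper's proof is much shorter because it makes one observation you miss: the map $\Phi_{G,H}(x,y)=(G(x)+H(y))/2$ is literally the restriction to $\Gamma=X\times Y\subset\C^{2n}\times\C^{2n}=\C^{4n}$ of a \emph{linear} map $\C^{4n}\to\C^{2n}$. Hence the family $\{\Phi_{G,H}\}$ is not some ad hoc algebraic family requiring a bespoke parametric-transversality computation; it sits inside the family $\Lin(4n,2n)$ of all linear projections, via $\Psi(G,H)=\tfrac{G+H}{2}$. Mather's generic projection theorem then furnishes directly a Zariski open $V\subset\Lin(4n,2n)$ on which $\pi|_\Gamma$ is transverse to the Thom--Boardman strata, and one takes $U=\Psi^{-1}(V)$. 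This dissolves your ``main obstacle'' entirely: you do not need to verify by hand that two independent copies of the affine group supply enough jet-level freedom, because Mather has already done exactly that for arbitrary linear projections of a smooth subvariety.

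For the second assertion the paper likewise appeals to an off-the-shelf result (Jelonek's theorem on topological equivalence of generic projections, applied after possibly shrinking $V$) rather than building a Whitney stratification and running Thom's first isotopy lemma from scratch. Your isotopy argument is reasonable in spirit and could presumably be completed, but, as you yourself note, it leaves residual issues about whether equisingularity holds on all of $U$ or only on a smaller $U'$; citing the ready-made statement sidesteps this. In short: your route could be pushed through, but the paper's route replaces both of your hard steps by direct citations once the reduction to linear projections of $\Gamma\subset\C^{4n}$ is seen.
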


\begin{proof}
Let us consider the variety $\Gamma:=X\times Y\subset \C^{2n}\times \C^{2n}.$ Consider the general projection $\pi: \Gamma\to\C^{2n}.$
By Mather's theorem (see \cite{math} and \cite{fjr}) there is an Zariski open subset $V\subset \Lin(4n,2n)$ such that if $\pi\in V$ then the projection 
$\pi_{|\Gamma}$ is transversal to the Thom-Boardman strata. Moreover by \cite{jel} we can shrink $V$ such that all projections $\pi_{|\Gamma}$ are topologically equivalent. Now consider the map $\Psi\colon \Lin(2n,2n)\times \Lin(2n,2n)\ni (G,H)\mapsto \frac{G+H}{2}\in \Lin(4n,2n)$ (here $\frac{G+H}{2}(x,y)=\frac{G(x)+H(y)}{2}$). Now it is enough to take $U=\Psi^{-1}(V).$
\end{proof}

\begin{definition}
Let $X^n,Y^n\subset\C^{2n}$ be smooth algebraic manifolds. By the generic symmetry defect set we mean the set $\SDS(G(X),H(Y))$ where 
$G,H\in \Lin(2n,2n)$ are sufficiently general. We denote it by $\GSDS(X,Y)$. This set is defined up to the ambient homeomorphism. We write $\GSDS(X)=\GSDS(X,X)$.
\end{definition}

\begin{remark}
{\rm Of course we can take here $G$ to be identity and take the map $H$ as close to the identity as we wish.}
\end{remark}

We show in Section \ref{sec_nieroz} that if neither $X$ nor $Y$ is a linear space then $\GSDS(X,Y)$ is an irreducible hypersurface with Thom-Boardman singularities. In particular in the case $n=2$ the curve $\GSDS(X,Y)$ has only cusps and nodes as singularities and the number of these cusps and nodes is an affine invariant of $X$ and $Y$. In Section~\ref{sec_GSDS} we compute these numbers for generic plane curves $X_{d_1}$ and $Y_{d_2}$ of degrees $d_1$ and $d_2$, respectively.

\section{The set \texorpdfstring{$\GSDS(X,Y)$}{GSDS(\textit{X,Y})} is irreducible} \label{sec_nieroz}

We need the following  theorem concerning the properties of linear systems on algebraic varieties, due to E. Bertini \cite{aki}:

\begin{theorem}\label{bertini}
Let $V$ be an algebraic variety over an algebraically closed field $k$ of characteristic $0$, let $L$ be a linear system without fixed components 
on $V$ and let $W$ be the image of the variety $V$ under the mapping $j_L$  given by $L$. Then:

(1) If dim $W>1$ then almost all the divisors of the linear system $L$ (i.e. all except a closed proper subset in the parameter space $\PP(L)$)
are irreducible reduced algebraic varieties.

(2) Almost all divisors of $L$
have no singular points outside the basis points of the linear system $L$ and the singular points of the variety $V$.
\end{theorem}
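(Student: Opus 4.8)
The plan is to reduce both assertions to the classical Bertini theorems for generic hyperplane sections of a projective variety and to combine an incidence-variety dimension count with a connectedness theorem; the hypothesis $\operatorname{char}k=0$ enters through generic smoothness. Write $N=\dim\PP(L)$, let $j_L\colon V\dashrightarrow\PP^N$ be the map given by $L$ and let $W=\overline{j_L(V)}\subseteq\PP^N$; by definition the divisors of $L$ are the closures in $V$ of the preimages $D_H:=j_L^{-1}(H)$ of the hyperplanes $H\subset\PP^N$, and the parameter space $\PP(L)$ is the dual projective space $\check{\PP}^N$ of such $H$. I would first invoke the unconditional Bertini theorems for $W\subseteq\PP^N$ itself --- for generic $H$ the section $W\cap H$ is reduced, is smooth away from $\Sing(W)$, and, when $\dim W>1$, is irreducible and connected --- and then transport this information to $V$ along $j_L$.

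For part (2): set $U=\Reg(V)\setminus\mathrm{Bs}(L)$, on which $j_L$ restricts to a morphism $\phi\colon U\to\PP^N$, and observe that if $\ell_H$ is a linear form cutting out $H$ then $D_H\cap U$ is the zero scheme of the section $\phi^{\ast}\ell_H$ of $\phi^{\ast}\mathcal{O}(1)$, so that $v\in U$ is a singular point of $D_H$ precisely when $H$ contains the embedded projective tangent space $\mathbb{T}_v:=\overline{\phi(v)+d\phi_v(T_vU)}$. By generic smoothness (this is where $\operatorname{char}k=0$ is used) there is a dense open $W^{\circ}\subseteq\Reg(W)$ over which $\phi$ is a smooth morphism; on $U^{\circ}:=\phi^{-1}(W^{\circ})$ the subspace $\mathbb{T}_v$ has dimension $\dim W$, so the hyperplanes making $D_H$ singular at a fixed $v\in U^{\circ}$ form a linear subspace of $\check{\PP}^N$ of codimension $1+\dim W$. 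Hence the incidence variety
\[
Z=\{(v,H)\in U^{\circ}\times\check{\PP}^N:\ v\in\Sing(D_H)\}
\]
has $\dim Z\le\dim V+N-1-\dim W$; when $\dim W=\dim V$ this is $<N$, so the projection $Z\to\check{\PP}^N$ is not dominant and the generic $D_H$ is smooth along $U^{\circ}$, while when $\dim W<\dim V$ the same conclusion is immediate because then $D_H\cap U^{\circ}=\phi^{-1}(W\cap H\cap W^{\circ})$ is the preimage of a smooth variety under a smooth morphism. Finally $U\setminus U^{\circ}$ is a proper closed subset of $V$, and since $L$ has no fixed component and $\dim W\ge 1$ the generic $D_H$ contains no fixed subvariety of dimension $\dim V-1$; one removes the contribution of $U\setminus U^{\circ}$ by stratifying $V$ and repeating the argument on each stratum (Noetherian induction). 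This yields (2).

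For part (1): by (2) the set $\Sing(D_H)$ lies in $\mathrm{Bs}(L)\cup\Sing(V)$ for generic $H$, and --- again because $D_H$ contains no fixed subvariety of dimension $\dim V-1$ --- this meets every component of $D_H$ in codimension $\ge 1$; hence $D_H$ is generically smooth along each component, so it is reduced. For irreducibility, assume $\dim W>1$. The universal divisor $\mathcal{D}=\{(v,H):v\in D_H\}\subseteq V\times\check{\PP}^N$ is irreducible, being the closure of a $\PP^{N-1}$-bundle over $V\setminus\mathrm{Bs}(L)$, so in its Stein factorisation $\mathcal{D}\to T\to\check{\PP}^N$ the finite map $T\to\check{\PP}^N$ is an isomorphism (equivalently, the generic $D_H$ is connected) as soon as one knows connectedness of the generic section; and here connectedness, and then irreducibility, of the generic $D_H$ follow by transporting the corresponding properties of $W\cap H$ up $j_L$: Zariski's connectedness theorem applied to the normalisation of $W$ gives connectedness, and the surjectivity of $\pi_1$ of a generic (punctured) hyperplane section of $W$ onto $\pi_1$ of $W$ --- the $\pi_1$-part of the Zariski--Lefschetz theorem, valid exactly because $\dim W>1$ --- shows that the monodromy of $j_L$ stays transitive over $W\cap H$, forcing $D_H$ to be irreducible. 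I expect the genuinely substantial ingredients to be this connectedness/irreducibility theorem for $W\cap H$ and the Zariski--Lefschetz surjectivity on fundamental groups; the dimension count in (2), the reduction of the general linear system to the complete system $\mathcal{O}(1)$ on $W$, and the bookkeeping over $U\setminus U^{\circ}$ are all routine.
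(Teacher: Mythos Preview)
The paper does not give its own proof of this theorem: it is quoted as a classical result due to Bertini, with a reference to Akizuki \cite{aki}, and is used as a black box in the proof of Theorem~\ref{glowne1}. So there is no ``paper's proof'' to compare your attempt against.

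As a standalone sketch of Bertini's theorem, your argument for part~(2) --- generic smoothness in characteristic~$0$ plus the incidence-variety dimension count --- is the standard textbook route and is fine. Your argument for part~(1) has the right ingredients but is tangled. You first set up the Stein factorisation of the universal divisor $\mathcal{D}\to\check{\PP}^N$ and then say it yields an isomorphism $T\to\check{\PP}^N$ ``as soon as one knows connectedness of the generic section'', which is precisely what you are trying to prove, so that sentence is circular. You then try to transport connectedness of $W\cap H$ up along $j_L$ via ``Zariski's connectedness theorem applied to the normalisation of $W$'', but that theorem controls the fibres of the normalisation map $W^\nu\to W$, not the fibres of $j_L\colon V\dashrightarrow W$, which need not be connected. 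The $\pi_1$-Lefschetz argument you end with is indeed the correct mechanism, but it should be applied to the Stein factorisation of $j_L$ itself: write $V\dashrightarrow V'\to W$ with $V'\to W$ finite and $V\dashrightarrow V'$ having connected generic fibre; irreducibility of $V'$ means the monodromy of $V'\to W$ is transitive, and surjectivity of $\pi_1(W\cap H)\to\pi_1(W)$ (valid for $\dim W>1$) keeps it transitive over $W\cap H$, giving irreducibility of $D_H'$ in $V'$, which then pulls back to $V$. Finally, since the statement is over an arbitrary algebraically closed field of characteristic~$0$, you should phrase the monodromy argument using the \'etale fundamental group rather than the topological one.
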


Let us recall that if we have a projective hypersurface $X\subset \PP^n$ then we have the dual map $\phi\colon \Reg(X)\ni x\mapsto T_x X\in {\PP^n}^*$ and the variety $X^*=cl(\phi(X))$ is called the dual to $X$. More generally if $X$ is of arbitrary dimension and $x\in \Reg(X)$ then the hyperplane $H\in {\PP^n}^*$ is tangent to $X$ if  $T_x X$ is  contained in $H$ (here we consider $T_x X$ as a projective subspace of $\PP^n$). We have a well-known reflexivity property: $X^{**}=X$. 
From this it easily follows that:

\begin{proposition}
Let $X$ be a hypersurface in $\PP^n.$ If $\dim X^*=s <n-1$ then $X$ is $n-1-s$ ruled, i.e., through every point $x\in X$ there is a projective linear subspace $L^{n-1-s}\subset X$ of dimension $n-1-s$.
\end{proposition}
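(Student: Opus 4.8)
The plan is to deduce the ruledness of $X$ from the reflexivity property $X^{**}=X$ together with a dimension count on the conormal variety. First I would introduce the conormal variety $\mathcal{Z}\subset \PP^n\times {\PP^n}^*$, defined as the closure of the set of pairs $(x,H)$ with $x\in\Reg(X)$ and $T_xX\subset H$. This variety has dimension $n-1$ (it fibers over $\Reg(X)$, which has dimension $n-1$ since $X$ is a hypersurface, with one-dimensional fibers — the pencil of hyperplanes through the fixed tangent hyperplane $T_xX$; wait, more carefully: since $X$ is a hypersurface, $T_xX$ is itself a hyperplane, so the fiber over a general $x$ is a single point, and $\dim\mathcal Z=n-1$). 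The two projections $p\colon\mathcal Z\to X$ and $q\colon\mathcal Z\to X^*$ are the key objects: $q$ is dominant onto $X^*$ by definition, and reflexivity says that $\mathcal Z$ is symmetric, i.e. it is also the conormal variety of $X^*$, so $p$ is the "conormal projection from the side of $X^*$".

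The main step is the fiber-dimension estimate. Since $\dim X^*=s$ and $\dim\mathcal Z=n-1$, the general fiber of $q\colon\mathcal Z\to X^*$ has dimension $(n-1)-s$. By reflexivity, the fiber $q^{-1}(H)$ over a general tangent hyperplane $H$ is precisely the set of points $x\in X$ at which $H$ is tangent, and — this is the classical fact underlying strange duality — this fiber is a \emph{linear} subspace of $\PP^n$: it is the "contact locus" of $H$, which for the conormal variety is known to be linear (it is the projectivization of the kernel direction, or can be seen from the fact that $\mathcal Z$ is itself the conormal variety of $X^*$ and the fiber of the conormal projection over a smooth point of $X^*$ is a linear space, namely the embedded tangent space to the dual setup). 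Thus through a general point of $X$ lying in such a contact locus there passes a linear subspace $L\cong\PP^{n-1-s}\subset X$.

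The remaining point is to pass from "a general point of $X$" to "every point of $X$". For this I would use a limiting/incidence argument: consider the incidence variety $\mathcal I\subset X\times \mathbb{G}(n-1-s,n)$ consisting of pairs $(x,L)$ with $x\in L\subset X$; the previous step shows the projection $\mathcal I\to X$ is dominant, and since both $\mathcal I$ and $X$ are projective (hence the image is closed) the projection is in fact surjective, so through every $x\in X$ there is such a linear space $L^{n-1-s}\subset X$.

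The step I expect to be the main obstacle is the claim that the contact locus — the general fiber of the conormal projection $p$ (equivalently $q$ viewed from $X^*$) — is a linear subspace. This is the heart of the matter and is exactly where reflexivity is used in an essential way; in characteristic $0$ it follows from the fact that $\mathcal Z$ is the conormal variety of the smooth locus of $X^*$ and a general fiber of a conormal projection over a smooth point equals the projective tangent space "cut out" appropriately, which is linear. I would cite the standard reference for reflexivity and strange duality (e.g. the treatment in Gelfand–Kapranov–Zelevinsky or Tevelev's \emph{Projective Duality and Homogeneous Spaces}) rather than reprove it, so that the proof of the Proposition reduces to the dimension count and the closedness argument above.
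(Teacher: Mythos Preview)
Your proposal is correct and is precisely the standard derivation from reflexivity that the paper alludes to; the paper itself gives no proof beyond the sentence ``From this it easily follows that'', so your argument via the conormal variety (dimension count on $\mathcal Z$, linearity of the general fiber of $q$ from reflexivity, then the incidence--closure argument to pass from a general to every point) is exactly the expected expansion of that remark.

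One small comment: in your write-up you could streamline the linearity step by saying directly that, since $\mathcal Z$ is also the conormal variety of $X^*$, for $H\in\Reg(X^*)$ the fiber $q^{-1}(H)$ is by definition the set of hyperplanes in $(\PP^n)^*$ containing $T_HX^*$, which is the linear subspace of $\PP^n$ dual to the $s$-dimensional space $T_HX^*$ and hence has dimension $n-1-s$; this removes the need to cite an external ``contact locus is linear'' statement, as it is immediate from the definition once reflexivity is granted.
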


Now we can pass to the main result of this section:

\begin{theorem}\label{glowne1}
Let $X\subset\C^n$ be a smooth algebraic variety of dimension $k$. Assume that $X$ is not $k-1$ ruled. Then the critical set $C_\pi$ and the discriminant $\Sigma_\pi=\pi(C_\pi)$ of a generic projection $\pi\colon X\to \C^k$ are irreducible. Moreover, $C_\pi$ is smooth and $\Sigma_\pi$ has only Thom-Boardman singularities.
\end{theorem}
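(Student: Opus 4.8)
The plan is to realize the critical set and discriminant of a generic projection via a Bertini-type argument applied to an auxiliary linear system, and then invoke the transversality statement already in hand (Mather's theorem, as used in the proof of the previous theorem) to get the Thom--Boardman singularities. First I would set up the relevant incidence variety. A projection $\pi\colon X\to\C^k$ is given (after a choice of coordinates) by a linear subspace $\Lambda$ of codimension $k$ in the ambient $\C^n$ playing the role of the center of projection; the critical set $C_\pi$ consists of those $x\in X$ whose tangent space $T_xX$ meets the direction of $\Lambda$ nontrivially, equivalently whose projective tangent space is tangent to a hyperplane through $\Lambda$. So I would consider the conormal-type incidence variety $I=\{(x,H): x\in X,\ H\supset T_xX\}\subset X\times{\PP^n}^*$, which maps onto the dual variety $X^*=\overline{\phi(X)}$ via the projection to ${\PP^n}^*$; the hypothesis that $X$ is not $k-1$ ruled forces, by the displayed Proposition, that $\dim X^*=n-1$, i.e. $X^*$ is a hypersurface, and $I$ is irreducible of dimension $n-1$ (it is a projective bundle over the smooth $X$, being the projectivization of the conormal bundle). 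The center $\Lambda$ of a generic projection corresponds to a generic linear subspace $\PP^{k-1}\subset{\PP^n}^*$ of the dual space, and $C_\pi$ is cut out on $I$ by the condition $H\in\langle\Lambda,\ \cdot\ \rangle$, i.e. it is the preimage in $I$ of a generic linear section of $X^*$.

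The key step is then: since $\dim X^*=n-1>k-1$ in general position is the relevant range, a generic $\PP^{k-1}$ in ${\PP^n}^*$ restricted appropriately gives, via the map $I\to{\PP^n}^*$, a linear system on $I$ whose generic member is $C_\pi$. Here I would apply Bertini's theorem (Theorem~\ref{bertini}): the linear system has no fixed components and its associated map has image of dimension $>1$ (this is where the non-ruledness hypothesis feeds in, guaranteeing $\dim X^*$ is large enough that the linear sections are positive-dimensional and move in a system with big image), so by part (1) the generic $C_\pi$ is irreducible and reduced, and by part (2) it is smooth away from the base locus and the singular locus of $I$ — but $I$ is smooth and a generic center $\Lambda$ avoids the base locus, so $C_\pi$ is smooth. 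Irreducibility and smoothness of $C_\pi$ pass to irreducibility of $\Sigma_\pi=\pi(C_\pi)$ since it is the image of an irreducible set under a morphism. A bit of care is needed in the dimension bookkeeping: one must check that for a generic projection $\pi$ the restriction $\pi|_X$ is generically one-to-one on a neighborhood of $C_\pi$ onto $\Sigma_\pi$ or at least that $\Sigma_\pi$ is a hypersurface in $\C^k$, which follows from $\dim C_\pi=k-1$ together with the fact that the fold map contracts each sheet at most finitely.

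For the final clause, that $\Sigma_\pi$ has only Thom--Boardman singularities, I would reuse exactly the mechanism from the proof of the unnamed theorem preceding Definition of $\GSDS$: by Mather's theorem (\cite{math}, \cite{fjr}) there is a Zariski-open set of linear projections $\pi$ for which $\pi|_X$ is transverse to all the Thom--Boardman strata in the appropriate jet space, hence $\pi|_X$ is a Thom--Boardman mapping in the sense of the Definition, and therefore its discriminant $\Sigma_\pi$ is a hypersurface with Thom--Boardman singularities by definition; intersecting this Zariski-open set with the one produced by the Bertini argument above yields a single generic open set on which all conclusions hold simultaneously. (By \cite{jel} one may further shrink so that the topological type is constant, though that is not needed for the present statement.)

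The main obstacle I anticipate is the first step: correctly identifying $C_\pi$ as (the total space over of) a generic \emph{linear} section of the dual variety $X^*$ inside the conormal incidence $I$, and verifying that the resulting family of divisors on $I$ genuinely forms a base-point-free (off a small set) linear system with image of dimension at least $2$, so that Bertini applies in the strong form (1)+(2). This is precisely where the hypothesis ``$X$ is not $k-1$ ruled'' must be used — via the Proposition it guarantees $\dim X^* = n-1$, which is what makes the linear sections of $X^*$ (and hence $C_\pi$) move enough and be irreducible; without it $X^*$ could be too small, the linear system could have fixed components or a low-dimensional image, and the conclusion would fail. Everything after that — smoothness of $I$, genericity avoiding base loci, descent to $\Sigma_\pi$, and the Thom--Boardman clause — is either routine or a direct citation of results already invoked in the paper.
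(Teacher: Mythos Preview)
Your conormal-variety approach is genuinely different from the paper's and is essentially correct, but two details need repair. The paper does not work on $I$: it first factors $\pi=\rho\circ\phi$ through a generic projection $\phi\colon X\to\C^{k+1}$ onto a normal-crossings hypersurface, so that on $Y=\Reg(\phi(X))$ the critical locus of any further projection $\rho\colon Y\to\C^k$ is a single Jacobian-determinant divisor; as $\rho$ varies these form an honest linear system on $Y$ and one application of Bertini suffices, with the hypothesis $\dim j_L(Y)>1$ checked via a short ad hoc lemma on subvarieties with mutually parallel tangent hyperplanes together with the fact that non-ruledness of $X$ passes to $\overline{\phi(X)}$. In your setup, by contrast, the lift of $C_\pi$ to $I$ is not ``a generic member'' of a linear system but the common zero locus of $n-k$ generic members of $|pr_2^*\mathcal O(1)|$, so Bertini must be iterated $n-k$ times; and the displayed Proposition is stated only for hypersurfaces, while even its extension to arbitrary smooth $X$ (linearity of contact loci) yields only $\dim X^*>n-k$, not $\dim X^*=n-1$, from ``not $(k-1)$-ruled''. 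Fortunately that weaker bound is exactly what keeps the image dimension above $1$ through all $n-k$ Bertini steps, so your argument survives once these points are corrected. The paper's hypersurface reduction is, in effect, a device to collapse your $n-k$ iterations into a single one.
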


\begin{proof}
We can write $\pi=\rho\circ\phi$ where $\phi\colon X\to\C^{k+1}$ and $\rho\colon \phi(X)\to\C^k$ are generic projection. By Mather's Theorem (see \cite{math}) the variety $\phi(X)$ has only normal crossings outside critical values of $\phi$. Since the set of critical values has codimension two or more we can assume that $\phi(X)$ has only normal crossings. Let $S_1, \ldots, S_m$ be all irreducible components of $\Sing(\phi(X))$.
Take sufficiently general points $a_i\in S_i$. 
Then two different branches of $\phi(X)$ meet in $a_i$, in particular we have two different tangent spaces $R_i$ and $P_i$ to these branches. The set of projections $\rho\colon \phi(X)\to\C^k$ which induce isomorphisms on all spaces $R_i, P_i$ is open and dense in the family of such projections $\rho\colon \phi(X)\to\C^k$. 

Hence in fact we may prove our theorem only for $Y=\Reg(\phi(X))$ and a generic projection $\rho\colon Y\to \C^k$. We apply here Theorem \ref{bertini}. Note that the critical set of a projection $\pi$ given by equations $(\sum a_{1i}x_i,\ldots,\sum a_{ki} x_i)$ is described by an equation:
\begin{equation}\label{eq_system}
 \det \left[\begin{matrix} f_{x_1} & \ldots & f_{x_{k+1}} \\ a_{11} & \ldots & a_{1,k+1}\\  \ldots \\  a_{k,1} & \ldots & a_{k,k+1} \end{matrix}\right]=0.
\end{equation}
where $f=0$ is the equation of $\phi(X)$ and $f_{x_i}:=\frac{\partial f}{\partial x_i}.$ 

Note that if $\rho$ goes through all possible projections then (\ref{eq_system}) forms a linear system $L$ on $Y$. Of course it has no base points on $Y$. It is enough to prove that $\dim j_L(Y)>1$. First note that if $x,y\in Y$ and $T_x$ is not parallel to $T_y$ then there is a section $s$ of $L$ such that $s(x)=0$ and $s(y)\not=0$. Indeed, it is enough to take a  projection $\rho$ with center $P$ such that $P\in \overline{T_x X}$ but $P\not\in \overline{T_y Y}$ and $s$ the section given by $\rho$.  Hence $L$ separates points with not parallel tangent spaces. Now we need the following:

\begin{lemma}\label{lemat}
Let $S\subset Y$ be an irreducible subvariety such that for all $y\in S$ the tangent spaces $T_y Y$ are parallel. Then there is a hyperplane $H$ such that $S\subset H$ and $T_y Y=H$ for every $y\in S$.
\end{lemma}

\begin{proof}
We may assume that $S$ is not a point. Assume that all tangent spaces $T_y Y$, $s\in S$ are parallel to some hyperplane $W$ given by equation $h=0$. If $h(Y)=c$ then for $H:=\{h=c\}$ we have $Y\subset H$ and all tangent spaces are equal to $H$. If $h(Y)=\C$, then by Sard's theorem there is a $c\in \C$ such that the hyperplane $H$ is transversal to $\Reg(S)$, a contradiction.  
\end{proof}

Now assume that $S$ is a component of a fiber of the mapping $j_L$. Then all tangent planes $T_y Y$, $y\in S$ are parallel. From Lemma \ref{lemat} we have that $T_y Y=H$ for some hyperplane $H$ and every $y\in S$. In particular $Y$ is in the fiber of the mapping $\psi\colon Y\ni y\mapsto \overline{T_y Y}\in Y^*$. Since $X$ is not $k-1$ ruled and $\phi(X)$ is a linear birational projection of $X$, the variety $\overline{\phi(X)}$
is also not $k-1$ ruled. In particular general fibers of the mapping $\psi$ have dimension less than $k-1$. Hence dim $j_L(Y)>1$.

The last statements follows from Mather's projection theorem.
\end{proof}

\begin{remark}
{\rm The assumption that $X$ is not $k-1$ ruled is essential. Indeed, every generic projection of a cylinder $D=\{ (x,y,z)\in \C^3: x^2+y^2=1\}$
to $\C^2$ has reducible critical set and reducible discriminant.}
\end{remark}

\begin{corollary}
Let $X^n,Y^n\subset\C^{2n}$ be a smooth algebraic varieties. If neither $X$ nor $Y$ is linear then the set $\GSDS(X,Y)$ is an irreducible hypersurface.
\end{corollary}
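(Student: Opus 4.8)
The plan is to deduce the corollary from Theorem~\ref{glowne1}, applied not to $X$ or $Y$ but to the product $\Gamma:=X\times Y\subset\C^{2n}\times\C^{2n}\cong\C^{4n}$. Since $X$ and $Y$ are smooth and irreducible of dimension $n$, the variety $\Gamma$ is smooth and irreducible of dimension $2n$ in $\C^{4n}$. Recall from the construction that, for a generic pair $(G,H)\in\Lin(2n,2n)\times\Lin(2n,2n)$, the set $\GSDS(X,Y)=\SDS(G(X),H(Y))$ is the discriminant $\Sigma_\pi$ of the projection $\pi=\tfrac{G+H}{2}$ restricted to $\Gamma$ (this is already how $\GSDS$ was defined, via $U=\Psi^{-1}(V)$: the map $\Gamma\ni(x,y)\mapsto\tfrac{G(x)+H(y)}{2}$ is the branch‑locus/discriminant set-up). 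Since the projections $\tfrac{G+H}{2}$ with $G,H$ invertible form a Zariski dense subset of $\Lin(4n,2n)$, a generic choice of $(G,H)$ makes $\pi|_\Gamma$ a generic projection $\Gamma\to\C^{2n}$ in the sense of Theorem~\ref{glowne1}. Hence it suffices to check the hypothesis of that theorem for $\Gamma$, namely that $\Gamma$ is \emph{not} $(2n-1)$-ruled.

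This is the heart of the matter, and I would prove it by an elementary dimension count. It is enough to show that $X\times Y$ contains no affine-linear subspace of dimension $2n-1$ (any projective $(2n-1)$-plane through a finite point of $\overline{X\times Y}$ would restrict to one, so this rules out $(2n-1)$-ruledness). Suppose $L\subseteq X\times Y$ is an affine-linear subspace with $\dim L=2n-1$, and let $p_1,p_2\colon\C^{4n}\to\C^{2n}$ be the two coordinate projections. Then $p_1(L)$ and $p_2(L)$ are affine-linear subspaces with $p_1(L)\subseteq X$ and $p_2(L)\subseteq Y$. Since $X$ is irreducible of dimension $n$ and is not linear, it contains no affine-linear subspace of dimension $n$ (such a subspace would be an irreducible closed subset of $X$ of dimension $\dim X$, hence all of $X$, making $X$ linear); therefore $\dim p_1(L)\le n-1$, and likewise $\dim p_2(L)\le n-1$. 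But $L\subseteq p_1(L)\times p_2(L)$, so
\[
2n-1=\dim L\le\dim p_1(L)+\dim p_2(L)\le(n-1)+(n-1)=2n-2,
\]
a contradiction. This uses in an essential way that \emph{both} $X$ and $Y$ are non-linear.

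With $\Gamma$ not $(2n-1)$-ruled, Theorem~\ref{glowne1} (with $\Gamma$ in place of $X$ and $2n$ in place of $k$) gives that $C_\pi$ is smooth and irreducible and that $\Sigma_\pi$ is irreducible with only Thom-Boardman singularities. As $\GSDS(X,Y)$ coincides with $\Sigma_\pi$ for a generic pair $(G,H)$, it is an irreducible hypersurface, which is the assertion.

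I do not expect a serious obstacle here. The non-ruledness of $X\times Y$ is the only genuinely new ingredient, and its proof is the short dimension count above. The single point that needs a little care is that the special projections $\tfrac{G+H}{2}$ really do realize generic projections of $\Gamma$ in the sense of Theorem~\ref{glowne1}, and also that $\SDS(G(X),H(Y))$ has no extra component (``at infinity'') besides $\Sigma_\pi$ for generic $(G,H)$; the first is encoded in the set $U=\Psi^{-1}(V)$ already introduced, and the second is part of the basic properties of generic linear projections used throughout. Finally, one should keep in mind that $X$ and $Y$ are taken irreducible, for otherwise $X\times Y$ — and hence $\GSDS(X,Y)$ — would already be reducible, so the hypothesis is in any case necessary.
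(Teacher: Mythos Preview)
Your proposal is correct and follows the same route as the paper: both reduce to Theorem~\ref{glowne1} by showing that $\Gamma=X\times Y$ is not $(2n-1)$-ruled. The paper's proof is a two-line remark (``$X,Y$ not linear $\Rightarrow$ ruled in dimension at most $n-1$, hence $X\times Y$ is ruled in dimension at most $2n-2$''); your projection/dimension count is exactly the explicit justification of that sentence.
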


\begin{proof} If $X$ and $Y$ are not linear then they are ruled in dimension at most $n-1$. Hence $X\times Y$ is ruled in dimension at most $2n-2$ and we can apply Theorem~\ref{glowne1}.
\end{proof}

\section{\texorpdfstring{$\GSDS(X,Y)$}{GSDS(\textit{X,Y})} for generic plane curves \texorpdfstring{$X$}{\textit{X}}, \texorpdfstring{$Y$}{\textit{Y}}} \label{sec_GSDS}

\begin{theorem}\label{th_GSDSXY}
Let $X,Y\subset \C^2$ be generic plane curves of degree $d_1$ and $d_2$, respectively, with $d_1,d_2\geq 2$. Then $C'=\GSDS(X,Y)$ is an irreducible curve with 
$$c=12{d_1 \choose 2}{d_2 \choose 2}$$
cusps and 
$$n=2{d_1 \choose 2}{d_2 \choose 2}[(d_1+d_2)^2-d_1-d_2-10]$$
nodes.
It has degree, genus and Euler characteristic equal respectively to
\begin{align*}
\deg(C')&=d_1d_2(d_1+d_2-2),\\
g(C')&=d_1d_2(2d_1d_2-3(d_1+d_2)+4)+1,\\
\chi(C')&=-d_1d_2(4d_1d_2-5(d_1+d_2)+6).
\end{align*}
\end{theorem}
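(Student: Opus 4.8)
The plan is to realize $\GSDS(X,Y)$ as the discriminant $\Sigma_\pi$ of the generic projection $\pi\colon \Gamma=X\times Y\to\C^2$, where $\Gamma\subset\C^2\times\C^2=\C^4$ is a smooth surface, and then to compute the relevant projective invariants of $\Gamma$ (degree, sectional genus, the numbers of pinch points and double-point components of its image under a generic projection to $\PP^3$) from the classical formulas, feeding them into the Plücker-type formulas for the apparent contour of a surface in $\PP^3$ projected to $\PP^2$. Concretely, embed $\Gamma$ in $\PP^4$ via the product of the projective closures $\bar X\subset\PP^2$, $\bar Y\subset\PP^2$ Segre-embedded into $\PP^4$; then $\deg\Gamma$, $K_\Gamma$, $\chi(\mathcal O_\Gamma)$, $c_1^2$, $c_2$ are computed by the standard product/adjunction rules on $\bar X\times\bar Y$. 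First I would fix a very general $\pi$ (allowed by Theorem \ref{glowne1}, since neither $X$ nor $Y$ is linear, so $\Gamma$ is not $1$-ruled), so that $C_\pi$ is smooth irreducible and $\Sigma_\pi=\pi(C_\pi)$ has only cusps and nodes.

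Next I would identify $\deg(C')$ as the degree of the ramification curve's image, i.e. the class of $\Gamma$ (the number of tangent hyperplanes through a generic codimension-$2$ flat, equivalently the degree of the dual variety's generic curve section). For a smooth surface $\Gamma$ of degree $\delta$ in $\PP^N$ this is $\delta+ (K_\Gamma\cdot H)+\ldots$; carrying out the intersection theory on $\bar X\times\bar Y$ one finds precisely $\deg(C')=d_1d_2(d_1+d_2-2)$. The genus $g(C')$ I would get by first computing the geometric genus of the smooth model $C_\pi$ via adjunction on $\Gamma$ (the ramification class $R$ satisfies $2g(C_\pi)-2 = R\cdot(R+K_\Gamma)$ after pushing to the chosen $\PP^3$-model, or directly $C_\pi$ is the zero locus of the Jacobian section \eqref{eq_system} pulled back to $\Gamma$, a divisor in a known linear system), and then using that $C'$ is the birational image of $C_\pi$ with $c$ cusps and $n$ nodes, so $g(C')=g(C_\pi)$ while $\deg$, $c$, $n$ are linked to $g(C')$ and to the arithmetic genus $p_a(C')=\binom{\deg(C')-1}{2}$ through $p_a(C')-g(C')=n+c$ and the Plücker relations for a plane curve with $\kappa$ cusps and $\nu$ nodes. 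From these two relations plus the independently computed cusp count $c$ (see below) one solves for $n$, and $\chi(C')=2-2g(C')-(\text{number of branch points}) = 2-2g(C')-c-2n$ reproduces the stated $\chi$.

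The cusp number $c$ I would compute as the number of $A_2$ (Thom–Boardman $\Sigma^{1,1}$) points of $\pi|_\Gamma$; by Mather/Thom–Boardman this is an intersection number on $\Gamma$ expressed in terms of Chern classes, classically the number of ``flecnodal''-type contacts. The slickest route is to count the pinch points of a generic projection of $\Gamma$ to $\PP^3$: there are (by the double-point formula) $\pi_{\mathrm{pp}} = \delta(\delta-1)\cdots$ of them, and each pinch point yields, on the further generic projection to $\PP^2$, exactly one cusp, while no other cusps appear — but actually the correct statement for the apparent contour is that cusps of $C'$ correspond to points where the projection direction lies in the asymptotic tangent, counted by a class-type intersection; carrying this out on $\bar X\times\bar Y$ gives $c=12\binom{d_1}{2}\binom{d_2}{2}$. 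A useful reduction here: $\binom{d}{2}$ is the number of ``flexes seen as nodes of the dual'' / the number of bitangents-type count for a generic curve, so the product structure makes $12\binom{d_1}{2}\binom{d_2}{2}$ plausible as $12$ times a product of such local contributions; I would verify the constant $12$ on the smallest case $d_1=d_2=2$ (two conics), where $\Gamma$ is a smooth quadric $\PP^1\times\PP^1$, $\deg C' = 2\cdot2\cdot2=8$, and the expected $c=12$, $n=2\cdot1\cdot1\cdot(16-4-10)=4$, matching the known Plücker data of such an octic.

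The main obstacle will be the cusp count: one must pin down the exact local-to-global contribution at each $\Sigma^{1,1}$ point and confirm the universal constant ($12$) rather than, say, $6$ or $24$. My plan to handle this is twofold: (i) express $c$ as a Thom polynomial $\int_\Gamma \alpha\, c_1(\Gamma)^2 + \beta\, c_2(\Gamma)$ evaluated with the known $c_1^2(\bar X\times\bar Y|_\Gamma)$ and $c_2$, where $(\alpha,\beta)$ are the universal coefficients for the cusp Thom polynomial of maps $\C^2\to\C^2$ (these are classical, $\alpha=\beta=$ small integers), and (ii) cross-check against the two-conic case and, if needed, the case $d_1=2,d_2=3$, using that for those low degrees $C'$ is an explicit curve whose Plücker invariants can be read off independently. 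Once $c$ and $\deg(C')$ and $g(C')$ are known, $n$ and $\chi(C')$ follow by pure bookkeeping from $p_a(C')-g(C')=n+c$ and $\chi(C')=2-2g(C')-c-2n$, so no further geometry is required beyond simplifying the resulting polynomials in $d_1,d_2$.
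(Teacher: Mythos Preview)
Your high-level strategy (realize $C'=\Sigma_\pi$ for a generic projection of $\Gamma=X\times Y$, compute $\deg$, $g$, $c$ by intersection theory on a smooth compactification, then recover $n$ and $\chi$ by Pl\"ucker bookkeeping) is different from the paper's, which works almost entirely by explicit local parametrization of branches at infinity and Bezout. Your route is in principle viable, but as written it has a genuine gap that would make the node count come out wrong.

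The critical point you overlook is that the projective closure $\overline{C'}\subset\PP^2$ is \emph{singular at infinity}. Your identity $p_a(\overline{C'})-g(C')=n+c$ assumes the only singular points of $\overline{C'}$ are the affine nodes and cusps, but this is false: $\overline{C'}$ meets the line at infinity in $d_1+d_2$ points, and at each such point coming from $X$ (resp.\ $Y$) there are $d_2(d_2-1)$ (resp.\ $d_1(d_1-1)$) pairwise transversal local branches, contributing a $\delta$-invariant of $\binom{d_2(d_2-1)}{2}$ (resp.\ $\binom{d_1(d_1-1)}{2}$). The paper devotes Steps~2 and~5 to establishing exactly this by writing down explicit Puiseux-type parametrizations of the branches of $\overline C$ at infinity and tracking them through $\pi$. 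Without these extra $\delta$-contributions, Serre's genus formula
\[
\tbinom{\deg C'-1}{2}=g(C')+n+c+\sum_{z\in\overline{C'}\cap\{t=0\}}\delta_z
\]
cannot be solved for $n$. Nothing in your Thom-polynomial or pinch-point computation on a smooth model of $\Gamma$ supplies this information; it is genuinely an analysis of how the non-proper projection $\pi$ behaves at infinity.

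Two smaller issues: (i) the Segre embedding of $\PP^2\times\PP^2$ lands in $\PP^8$, not $\PP^4$; the paper instead takes the closure of $X\times Y\subset\C^4$ in $\PP^4$, which is \emph{not} smooth at infinity, so you cannot transplant your Chern-class computations there directly --- you must work on the smooth model $\bar X\times\bar Y$ and then control what happens under the (rational) map to $\PP^2$, which again forces you into the branch analysis above. (ii) Your formula $\chi(C')=2-2g(C')-c-2n$ is not correct for an affine nodal-cuspidal curve; cusps are unibranch and contribute nothing to the difference $\chi(\tilde{C'})-\chi(C')$. The paper sidesteps this by obtaining $\chi(C)$ from the degree-$d_1d_2$ branched-cover relation $\chi(X\times Y)=d_1d_2-\chi(C')-n-c$, an independent equation you do not use.

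On the cusp count itself: your Thom-polynomial idea is reasonable in spirit, but you leave the actual coefficients and the compactification correction undetermined. The paper's method is concrete: the cusp locus is cut out on $C$ by a single explicit determinant $s$ of degree $d_1+2d_2-4$, and Bezout on $\overline C$ (after computing the intersection multiplicity of $\{s=0\}$ with each branch of $\overline C$ at infinity) yields $c=3d_1d_2(d_1-1)(d_2-1)$ directly, with no undetermined constants and no low-degree check needed.
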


\begin{proof}
To make the proof easier to read we will divide it into five steps.

{\bf Step 1:} Basic definitions and construction of $C'=\GSDS(X,Y)$.

Let $X=\{(x,y)\in\C^2:\ f(x,y)=0\}$ and $Y=\{(z,w)\in\C^2:\ g(z,w)=0\}$ where $f(x,y)$ and $g(z,w)$ are a general polynomials of degree $d_1$ and $d_2$, respectively. We will consider $\Gamma=X\times Y\subset \C^4\subset \PP^4$. We denote the coordinates in $\C^4$ by $x,y,z,w$ and the coordinates in $\PP^4$ by $x,y,z,w,t$ (slightly abusing notation). We denote the partial derivative by a lower index, e.g. $f_x$. By $\overline{f}$ we denote the homogenization of $f$ with respect to $t$ and by $\tilde{f}$ we denote the dehomogenization of $\overline{f}$ with respect to $x$. Note that in $\tilde{f}_x$ we first take the partial derivative and then the homogenization and dehomogenization. We will also denote the projective closure of a variety $V$ by~$\overline{V}$.

Since $X$ is generic and has degree $d_1$, there are $d_1$ distinct points of intersection of $\overline{X}$ with the line at infinity, say $(a_i:b_i:0)$, for $1\leq i\leq d_1$. Similarly, let $(c_i:d_i:0)$, $1\leq i\leq d_2$, be the points of intersection of $\overline{Y}$ with the line at infinity. Let $P_i=(a_i:b_i:0:0:0)$ and $Q_i=(0:0:c_i:d_i:0)$ be the corresponding points in $\overline{\Gamma}$. Note that the $d_1d_2$ lines spanned by $P_i$ and $Q_j$ also lie in $\overline{\Gamma}$. Since $\Gamma$ is a surface of degree $d_1d_2$, the intersection of $\overline{\Gamma}$ with the hyperplane at infinity consists only of those $d_1d_2$ lines.

Now consider a generic projection $\pi\colon \Gamma\to\C^2.$ Composing with a linear change of coordinates in the target $\C^2$ we can assume that $\pi(x,y,z,w)=(x+az+bw,y+cz+dw)$, where $a,b,c,d$ are generic. Let
\begin{equation}
h(x,y,z,w)=\det \left[\begin{matrix} f_{x}(x,y) & f_y(x,y) & 0 &0\\ 0 & 0 & g_{z}(z,w) & g_w(z,w) \\ 1 & 0 & a & b\\0 & 1 & c & d  \\   \end{matrix}\right].
\end{equation}

Let $C$ denote the critical set of $\pi$. It is given by equations:
\begin{equation}\label{eq_C}
f=0,\ g=0,\ h=bf_xg_z-af_xg_w+df_yg_z-cf_yg_w=0.
\end{equation}

We denote by $C'$ the curve $\GSDS(X,Y)=\pi(C)$.

Since $X$ is generic, the values $\ov{f}_x(P_i)$, $\ov{f}_y(P_i)$, $\ov{g}_z(Q_i)$ and $\ov{g}_w(Q_i)$ are nonzero. Thus, for a generic choice of $a,b,c,d$ we obtain
$$\ov{h}(\alpha a_i:\alpha b_i:\beta c_j:\beta d_j:0)=$$
$$\alpha\beta\left[\ov{f}_x(P_i)(b\ov{g}_z-a\ov{g}_w)(Q_j)+
\ov{f}_y(P_i)(d\ov{g}_z-c\ov{g}_w)(Q_j)\right]=\alpha^{d_1-1}\beta^{d_2-1} A_{i,j}$$
for some nonzero constants $A_{i,j}$. This means that the hypersurface defined by $\ov{h}$ intersects 
the lines spanned by $P_i$ and $Q_j$ only at the points $P_i$ and $Q_j$. It follows that $\ov{C}$ is a complete intersection given by $\ov{f}=\ov{g}=\ov{h}=0$, at least set-theoretically.

{\bf Step 2:} Examining the branches of $\ov{C}$ at infinity.

Now we will focus on examining the branches of $\ov{C}$ at infinity. Without loss of generality we may examine only branches through $P_1$ and assume that $P_1=(1:b_1:0:0:0)$. We will work in the neighborhood $U\cong\C^4$ defined in $\PP^4$ by $x\neq 0$. The notation $o(t^k)$ means here a series in $t$ of order strictly larger than $k$.

We can write down $\ti{f}$ up to multiplying by a constant as
$$\ti{f}(y,t)=(y-b_1)\prod_{i=2}^{d_1}(a_iy-b_i)+t\ti{f}^{(d_1-1)}(y)+o(t),$$
where $f^{(d_1-1)}$ is the homogeneous part of $f$ of degree $d_1-1$. Here we work in $\C^2$ and omit the $z$ and $w$ variables as they do not occur in $f$. Let $$A=-\ti{f}^{(d_1-1)}(b_1)/\prod_{i=2}^{d_1}(a_ib_1-b_i).$$ We claim that for a suitable series in $o(t)$ the function $\ti{f}$ vanishes on the curve parametrized for small $t$ by $(y(t),t)$ for $y(t)=b_1+At+o(t)$. Indeed, we have $\ti{f}(y(t),t)=o(t)$ and we may continue the process of defining $y(t)$ up to $o(t^k)$ in such a manner that $\ti{f}(y(t),t)=o(t^{k+1})$, passing with $k$ to infinity we obtain that $\ti{f}(y(t),t)=0$. Namely, if $y_k(t)=b_1+At+\sum_{j=2}^kA_jt^j$ and $\ti{f}(y_k(t),t)=o(t^k)=A'_{k+1}t^{k+1}+o(t^{k+1})$ then setting $A_{k+1}=-A'_{k+1}/\prod_{i=2}^{d_1}(a_ib_1-b_i)$ we obtain $\ti{f}(y_{k+1}(t),t)=o(t^{k+1})$.

Now we will construct $z_p(t)$ and $w_p(t)$, for $1\leq p\leq d_2(d_2-1)$, so that
$$b_p(t)=(y(t),z_p(t),w_p(t),t)$$
parametrizes for small $t$ a curve in the zero locus of $(\ti{f},\ti{g},\ti{h})$. Take $z_p(t)=B_pt+o(t)$ and $w_p(t)=C_p+o(t)$. We have $$\ti{g}(b_p(t))=g(B_p,C_p)t^{d_2}+o(t^{d_2}),$$
thus we need to ensure that $g(B_p,C_p)=0$. Furthermore,
$$\ti{h}(b_p)=$$
$$g_z(B_p,C_p)t^{d_2-1}(b\ov{f}_x+d\ov{f}_y)(1,b_1+o(1),t)
-g_w(B_p,C_p)t^{d_2-1}(a\ov{f}_x+c\ov{f}_y)(1,b_1+o(1),t)=$$
$$[g_z(B_p,C_p)(bf^{(d_1)}_x+df^{(d_1)}_y)(1,b_1)
-g_w(B_p,C_p)(af^{(d_1)}_x+cf^{(d_1)}_y)(1,b_1)]t^{d_2-1}+o(t^{d_2-1}),$$
thus we need to ensure that $(D_1g_z-D_2g_w)(B_p,C_p)=0$, where $D_1$ and $D_2$ are constants dependent on $a,b,c,d$ and the homogeneous part of $f$ of degree $d$. So we have to take $B_p$ and $C_p$ such that the point $(B_p,C_p)$ lies in the intersection of curves given in $\C^2$ by $g=0$ and $D_1g_z-D_2g_w=0$. Moreover, we are able to expand $z_p(t)$ and $w_p(t)$ up to degree $k$ in such manner that $\ti{g}b_p(t)=o(t^{d_2+k-1})$ and $\ti{g}(b_p(t))=o(t^{d_2+k-2})$. Indeed, if we expand $z_p(t)$ and $w_p(t)$ up to degree $k$ then we can obtain the coefficients of $t^{k+1}$ by solving a system of two linear equations with two variables. The determinant of the matrix associated with the system is $$[g_z(D_1g_{zw}-D_2g_{ww})-g_w(D_1g_{zz}-D_2g_{zw})](B_p,C_p),$$
which is nonzero for generic $g$. Thus the system of equations has a unique solution.

To make sure that we indeed obtain $d_2(d_2-1)$ distinct curve germs $b_p(t)$ we need to use the genericity of $X$, $Y$ and $a,b,c,d$ to ensure that $g=D_1g_z-D_2g_w=0$ has $d_2(d_2-1)$ distinct solutions. To abbreviate the argument we will say in this case that $(f,g,D_1,D_2)$ is good. For any fixed pair $(D_1,D_2)\in\C^2\setminus\{(0,0)\}$ there is a dense subset of $\Omega_2(d_1)\times \Omega_2(d_2)$, the product of spaces of bivariate polynomials of degree, respectively, at most $d_1$ or $d_2$, such that $(f,g,D_1,D_2)$ is good. This implies, that the set of good $(f,g,D_1,D_2)$ is dense in $\Omega_2(d_1)\times \Omega_2(d_2)\times\C^2$. It is also constructible, so it contains an open dense subset. In particular, for generic $f$ and $g$ there is an open dense subset $V\subset\C^2$ such that for all $(D_1,D_2)\in V$ the quadruple $(f,g,D_1,D_2)$ is good. The values $f^{(d)}_x(1,b_1)$ and $f^{(d)}_y(1,b_1)$ depend only on $X$ and for generic $X$ they are nonzero, i.e., $f=0$ does not intersect $f_x=0$ nor $f_y=0$ at infinity. Thus for generic $a,b,c,d$ the pair $((bf^{(d)}_x+df^{(d)}_y)(1,b_1), (af^{(d)}_x+cf^{(d)}_y)(1,b_1))$ is in $V$.

Summarizing, $\ov{C}$ has $d_1+d_2$ points of at infinity: $P_i$ for $1\leq i\leq d_1$ and $Q_i$ for $1\leq i\leq d_2$. At each of $P_i$ the curve $\ov{C}$ has $d_2(d_2-1)$ distinct branches. By symmetry at each of $Q_i$ the curve $\ov{C}$ has $d_1(d_1-1)$ distinct branches. Thus $\ov{C}$ has $d_1d_2(d_1+d_2-2)$ branches at infinity. In particular $\ov{C}$ is a scheme-theoretic complete intersection of $\ov{f}$, $\ov{g}$ and $\ov{h}$. Moreover, $C$ is a curve of degree $d_1d_2(d_1+d_2-2)$ and, consequently, $C'$ is a curve of degree $d_1d_2(d_1+d_2-2)$ as well.

{\bf Step 3:} Computing the number of cusps of $\GSDS(X,Y)$.

Note that each cusp of $\GSDS(X,Y)$ is the image of a unique critical point of $\pi|_C$. Thus $c$, the number of cusps, is equal to the number of points of $C$ at which the matrix
\begin{equation}
M=\left[\begin{matrix} f_x & f_y & 0 &0\\ 0 & 0 & g_z & g_w\\h_x & h_y & h_z &h_w \\ 1 & 0 & a & b\\0 & 1 & c & d  \\   \end{matrix}\right].
\end{equation}
fails to have maximal rank. Since $g$, $g_z$ and $g_w$ don't have common zeroes, the second row of $M$ does not vanish on $C$. Moreover, $h$ vanishes on $C$, so the first, second, fourth and fifth row of $M$ are linearly dependent. It follows that the rank of $M$ does not decrease after removing the first row. We denote the determinant of $M$ without the first row by $s$, i.e.,
$$s=h_x(ag_w-bg_z)+h_y(cg_w-dg_z)-h_zg_w+h_wg_z.$$

Now we compute the intersection multiplicity of the zero locus of $\ov{s}$ with $\ov{C}$ at $P_i$ and $Q_i$. For $P_i$ it suffices to determine the order of $\ov{s}(b_p(t))$ for the branches $b_p(t)$ of $\ov{C}$ at $P_1$. Recall that $$b_p(t)=(1:b_1+o(1):B_pt+o(t):C_pt+o(t):t),$$
thus $h_x$, $h_y$, $g_w$ and $g_z$ composed with $b_p(t)$ have order $d_2-1$ and $h_z$ and $h_w$ composed with $b_p(t)$ have order $d_2-2$. By genericity of $f$ and $g$ the initial coefficient does not vanish, so the order of $\ov{s}(b_p(t))$ is $2d_2-3$. The computation will not be fully symmetric for $Q_i$. A branch at a point $Q_i$ will have the form
$$b_q(t)=(A_qt+o(t):B_qt+o(t):c_1+o(1):d_1+o(1):t).$$
Thus, after composing with $b_q(t)$, $g_w$ and $g_z$ will have order $0$, $h_x$ and $h_y$ will have order $d_1-2$ and $h_z$ and $h_w$ will have order $d_1-1$. So the order of $\ov{s}(b_q(t))$ is $d_2-2$. Summing up we obtain that the intersection multiplicity is $d_2(d_2-1)(2d_2-3)$ at points $P_i$ and $d_1(d_1-1)(d_1-2)$ at points $Q_i$.

Note that $C$ and $\{s=0\}$ intersect transversally because otherwise $\GSDS(X,Y)$ would have a singularity other than a cusp or a node. Thus by Bezout's Theorem the number of intersection points is
$$c=d_1d_2(d_1+d_2-2)(d_1+2d_2-4)-d_1d_2(d_2-1)(2d_2-3)-d_2d_1(d_1-1)(d_1-2)=$$
$$=3d_1d_2(d_1-1)(d_2-1).$$

{\bf Step 4:} Computing the Euler characteristic and genus.

Note that the mapping $\pi\colon X\times Y\to\C^2$ is a ramified covering of degree $d_1d_2$, with the discriminant $C'$. Let $n$ denote the number of nodes of $C'$ and $c$ denote the number of cusps. Let us recall that the mapping $p=\pi_{|C}: C\to C'$ is generically one-to-one and the fiber $p^{-1}(a)$ has more than one point (in fact then it has exactly two points) only if $a$ is a node. Consequently we can write the following equality:
$$\chi(X\times Y)=
d_1d_2(1-\chi(C'))+(d_1d_2-1)(\chi(C')-n-c)+(d_1d_2-2)(n+c).$$

The equation simplifies to:
\begin{equation}\label{eq_Euler}
\chi(C')+n+c=d_1d_2-\chi(X)\chi(Y).
\end{equation}

Moreover we have $\chi(X)+d_1=2-2g(X)$, so $\chi(X)=-d_1(d_1-2)$. Similarly, $\chi(Y)=-d_2(d_2-2)$. Furthermore, $\chi(C')-n=\chi(C)-2n$, so $\chi(C)=\chi(C')+n$. Substituting these equalities to equation (\ref{eq_Euler}) we obtain:
$$\chi(C)=d_1d_2-d_1d_2(d_1-2)(d_2-2)-c=-d_1d_2(4d_1d_2-5(d_1+d_2)+6).$$

From the equality $\chi(C)+d_1d_2(d_1+d_2-2)=2-2g(C)$ we obtain:
$$g(C)=d_1d_2(2d_1d_2-3(d_1+d_2)+4)+1.$$

{\bf Step 5:} Computing the number of nodes of $\GSDS(X,Y)$.

We will use the following theorem of Serre (see \cite{mil}, p. 85):

\begin{theorem}\label{thmgenusdelta}
If $\Gamma$ is an irreducible curve of degree $d$ and genus $g$  in the complex projective plane
then $$\frac{1}{2}(d-1)(d-2)= g + \sum_{z\in \Sing(\Gamma)} \delta_z,$$
where $\delta_z$ denotes the delta invariant of a point $z$.
\end{theorem}

In order to use this Theorem we will need to compute the delta invariants of points at infinity of $C'$. Note that if $b$ is a branch of $\ov{C}$ at $P_i$, then $\pi(b)$ will be a branch of $\ov{C'}$ at $P_i'=(a_i:b_i:0)$. If $b$ is a branch of $\ov{C}$ at $Q_i$ then $\pi(b)$ will be a branch of $\ov{C'}$ at $Q_i'=(ac_i+bd_i:cc_i+dd_i:0)$. Thus for generic projection $\pi$, i.e. generic quadruple $(a,b,c,d)$, the curves $\ov{C}$
and $\ov{C'}$ will have the same number of distinct points at infinity.

Now we will show, that the branches at infinity of $\ov{C'}$ are pairwise transversal. By symmetry it is enough to show this for branches at $P_1'$. Consider a branch $b_p(t)$ for $t$ small but nonzero. In the projective space $\PP^4$ we have
$$b_p(t)=(1:b_1+At+o(t):B_pt+o(t):C_pt+o(t):t),$$
thus in the space $\C^4$ in which $C$ was originally defined we have $$b_p(t)=(t^{-1},b_1t^{-1}+A+o(1),B_p+o(1),C_p+o(1)).$$
Consequently,
$$\pi(b_p(t))=(t^{-1}+aB_p+bC_p+o(1),b_1t^{-1}+A+cB_p+dC_p+o(1)).$$
In the projective space $\PP^2$ we have
$$\pi(b_p(t))=(t^{-1}+aB_p+bC_p+o(1):b_1t^{-1}+A+cB_p+dC_p+o(1):1)=$$
$$=(1+(aB_p+bC_p)t+o(t):b_1+(A+cB_p+dC_p)t+o(t):t)=$$
$$=(1:b_1+(A+(c-b_1a)B_p+(d-b_1b)C_p)t+o(t):t+o(t)).$$

Thus a branch $b_p'(t)$ of $\ov{C'}$ at $P_1'$ is parametrized by $(b_1+(A+(c-b_1a)B_p+(d-b_1b)C_p)t+o(t),t+o(t))$, so $[A+(c-b_1a)B_p+(d-b_1b)C_p,1]$ is
a tangent vector at $P_1'$. Since the points $(B_p,C_p)$ are distinct we conclude that for a generic quadruple $(a,b,c,d)$ we have $(c-b_1a)(B_{p_1}-B_{p_2})+(d-b_1b)(C_{p_1}-C_{p_2})\neq 0$ for $p_1\neq p_2$. So distinct branches have distinct tangent spaces.

Since $\ov{C'}$ has $d_2(d_2-1)$ pairwise transversal branches at $P_i'$ we obtain $\delta_{P_i'}{\ov{C'}}= d_2(d_2-1)[d_2(d_2-1)-1]/2$. Similarly, $\delta_{Q_i'}{\ov{C'}}= d_1(d_1-1)[d_1(d_1-1)-1]/2$.

By the Serre formula we have  
$$\frac{1}{2}\deg(C')(\deg(C')-1)=g(C')+n+c+d_1\delta_{P_i'}{\ov{C'}}+d_2\delta_{Q_i'}{\ov{C'}}.$$
The only unknown value in this equation is the number of nodes. After simplification we obtain
$$n=2{d_1 \choose 2}{d_2 \choose 2}[(d_1+d_2)^2-d_1-d_2-10].$$

\end{proof}

Taking $Y=X$ in Theorem \ref{th_GSDSXY} we immediately obtain:

\begin{corollary}\label{cor_GSDSX}
Let $X\subset \C^2$ be a generic plane curve of degree $d\geq 2$. Then $\GSDS(X)$ is an irreducible curve of degree $2d^2(d-1)$ and genus $g=2d^2(d^2-3d+2)+1$ with $c=12{d \choose 2}^2$
cusps and $n=4{d \choose 2}^2[2d^2-d-5]$ nodes. Moreover, $\chi(\GSDS(X))=-d^2(4d^2-10d+6).$
\end{corollary}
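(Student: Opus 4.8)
The plan is to obtain Corollary~\ref{cor_GSDSX} as the diagonal case $d_1=d_2=d$ of Theorem~\ref{th_GSDSXY}. First I would record why the theorem is applicable to the pair $(X,X)$. By definition $\GSDS(X)=\GSDS(X,X)=\SDS(G(X),H(X))$ for sufficiently general $G,H\in\Lin(2,2)$. Since the mid-point map obeys $\Phi_{L(X),L(Y)}=L\circ\Phi_{X,Y}\circ(L^{-1}\times L^{-1})$ for every affine automorphism $L$, the symmetry defect set is equivariant, $\SDS(L(X),L(Y))=L(\SDS(X,Y))$; taking $L=G^{-1}$ shows that $\GSDS(X)$ is the image under a linear automorphism — hence a homeomorphic copy — of $\SDS(X,K(X))$ with $K=G^{-1}H$ a general element of $\Lin(2,2)$. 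So it is enough that, for a fixed generic plane curve $X$ of degree $d\ge 2$, a generic $K$, and a generic projection, the pair $(X,K(X))$ falls inside the locus on which the five steps of the proof of Theorem~\ref{th_GSDSXY} are valid.

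Each step of that proof invokes only finitely many algebraic genericity conditions on the pair of curves and the quadruple $(a,b,c,d)$: $d$ distinct points at infinity on each factor; non-vanishing of $\ov{f}_x,\ov{f}_y$ (resp.\ $\ov{g}_z,\ov{g}_w$) at those points; the ``good'' condition on $(f,g,D_1,D_2)$; transversality of $C$ with $\{s=0\}$; and distinctness of the tangent directions of the branches of $\ov{C'}$ at infinity. With $X$ fixed and $Y=K(X)$ these become Zariski-open conditions on $(K,a,b,c,d)$, and each is non-empty — it holds for a truly independent generic $Y$, and (as in the Remark after the definition of $\GSDS$) one may take $K$ and the projection arbitrarily close to a configuration where it is already verified — so all of them hold simultaneously for generic $(K,a,b,c,d)$. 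Hence Theorem~\ref{th_GSDSXY} applies and, in particular, $C'=\GSDS(X)$ is an irreducible curve (which also follows directly from the corollary to Theorem~\ref{glowne1}, a plane curve of degree $\ge 2$ not being linear).

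It then remains only to set $d_1=d_2=d$ in the formulas of Theorem~\ref{th_GSDSXY} and simplify: $c=12\binom{d_1}{2}\binom{d_2}{2}=12\binom{d}{2}^2$; $n=2\binom{d}{2}^2\big[(2d)^2-2d-10\big]=4\binom{d}{2}^2\big[2d^2-d-5\big]$; $\deg(C')=d^2(2d-2)=2d^2(d-1)$; $g(C')=d^2(2d^2-6d+4)+1=2d^2(d^2-3d+2)+1$; and $\chi(C')=-d^2(4d^2-10d+6)$. I do not anticipate any real difficulty — the content is entirely inherited from Theorem~\ref{th_GSDSXY} — and the only place asking for a word of justification is the passage, addressed above, from a generic \emph{pair} of curves to a curve paired with a generic linear image of itself.
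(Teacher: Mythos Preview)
Your proposal is correct and follows the paper's own approach, which is simply to set $d_1=d_2=d$ in Theorem~\ref{th_GSDSXY}; the paper states this in a single line with no further justification. Your additional paragraph on equivariance and on why the genericity hypotheses of Theorem~\ref{th_GSDSXY} survive the specialization $Y=K(X)$ is more care than the paper itself provides, and the arithmetic checks out.
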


In particular for $d=2$ we have:

\begin{corollary}\label{koloex}
Let $X=\{ (x,y)\in \C^2: x^2+y^2=1\}.$ Then $GSDS(X)$ is an irreducible elliptic curve with  $12$ cusps and $4$ nodes. 
\end{corollary}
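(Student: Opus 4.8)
The plan is to obtain Corollary~\ref{koloex} by specialising Corollary~\ref{cor_GSDSX} to $d=2$, once we have checked that the unit circle is an admissible conic for that corollary.

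First I would substitute $d=2$ into the formulas of Corollary~\ref{cor_GSDSX}. The degree becomes $2d^2(d-1)=8$; the genus becomes $g=2d^2(d^2-3d+2)+1=8\cdot 0+1=1$, so the normalisation of $\GSDS(X)$ is a smooth projective curve of genus one, i.e.\ an elliptic curve; the number of cusps becomes $c=12\binom{2}{2}^2=12$; and the number of nodes becomes $n=4\binom{2}{2}^2[\,2\cdot 4-2-5\,]=4$. Since, by the same corollary, $\GSDS(X)$ is irreducible and has no singular points other than these cusps and nodes, it is an irreducible elliptic curve with $12$ cusps and $4$ nodes, as asserted.

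What remains is to justify that Corollary~\ref{cor_GSDSX} applies to the specific conic $X=\{x^2+y^2=1\}$, and not merely to a Zariski-generic one. Here I would use that $\GSDS$ is insensitive to affine automorphisms of $\C^2$, with all of its invariants (degree, genus, numbers of cusps and nodes) unchanged: indeed, for $A\in\Lin(2,2)$ one has $\GSDS(A(X))=\SDS(G(A(X)),H(A(X)))$ for generic $G,H\in\Lin(2,2)$, and $GA$, $HA$ run over generic affine maps as $G$, $H$ do, so $\GSDS(A(X))=\GSDS(X)$. Now over $\C$ every smooth conic meeting the line at infinity transversally is affinely equivalent to $uv=1$, and the unit circle is such a conic, since $\overline{X}$ meets $\{t=0\}$ in the two distinct points $(1:\pm i:0)$ and $\overline{f}_x=2x$, $\overline{f}_y=2y$ do not vanish there. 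Hence a generic affine image of $X$ is a generic conic, and all the genericity hypotheses used in the proof of Theorem~\ref{th_GSDSXY} hold for it (the remaining conditions in that proof concern only the auxiliary parameters $a,b,c,d$, which may be chosen generically). So $\GSDS(X)$ carries exactly the invariants computed in Theorem~\ref{th_GSDSXY} with $d_1=d_2=2$.

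The argument has no hard step: all the substance is in Theorem~\ref{th_GSDSXY} and Corollary~\ref{cor_GSDSX}. The only thing one must be a little careful about is the genericity verification in the previous paragraph — that this explicit conic lies in the open locus on which the proof of Theorem~\ref{th_GSDSXY} goes through — and even that is immediate from affine invariance together with the fact that $x^2+y^2=1$ is, over $\C$, affinely equivalent to a conic of the general (central) type.
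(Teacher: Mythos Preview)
Your proposal is correct and follows exactly the paper's own argument: substitute $d=2$ into Corollary~\ref{cor_GSDSX}, then use affine invariance of $\GSDS$ together with the fact that the affine orbit of the circle is the open locus of smooth conics meeting the line at infinity transversally (so it meets the generic locus required by Theorem~\ref{th_GSDSXY}). The paper states this last point in a single sentence after the corollary; you have simply made it more explicit.
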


Note that neither $X=\{ (x,y)\in \C^2: x^2+y^2=1\}$ nor $Y=\{ (x,y)\in \C^2: xy=1\}$ satisfy the conditions of genericity imposed in the proof of Theorem \ref{th_GSDSXY}, however a generic curve of degree $2$ can be reduced to $X$ and $Y$ via affine transformations.

In the real case we have a finite number of possible $\GSDS(X)$ and $\GSDS(Y)$ for the real circle $X$ and the real hyperbola $Y$. As an example we present in Figure~\ref{pict} the images of $\GSDS(X)$ and $\GSDS(Y)$ for $G(x,y)=(x,y)$ and $H(x,y)=(1.1x+0.1y,-0.2x+0.9y)$.
\begin{center}
\begin{figure}[ht]
\includegraphics[scale=0.4]{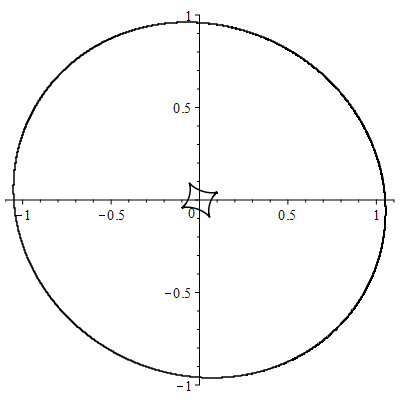}
\includegraphics[scale=0.4]{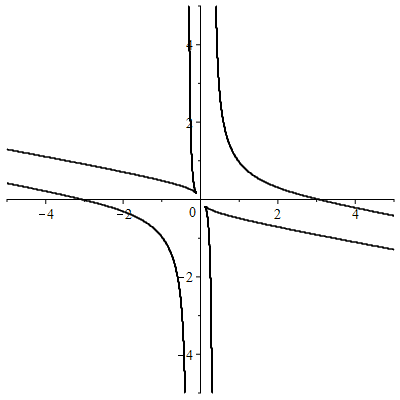}
\caption{ $\GSDS(X)$ and $\GSDS(Y)$.}\label{pict}
\end{figure}
\end{center}

The real curve $\GSDS(X)$ has four cusps and the real curve $\GSDS(Y)$ has two cusps.

\vspace{5mm}

{\bf Acknowledgment.}
All authors were partially supported by the grant of Narodowe Centrum Nauki number 2019/33/B/ST1/00755. The authors are grateful to professor M.A.S. Ruas for many helpful discussions.

\vspace{10mm}


\begin{thebibliography}{99}

\bibitem{aki} 	Y. Akizuki, {\em Theorems of Bertini on linear systems}, J. Math. Soc. Japan {\bf 3}, (1951), 170--180.

\bibitem{berry} M. Berry, {\em Semi-classical mechanics in
     phase space: A study of Wigner's function}, {Phil. Trans. Roy.
     Soc.}  {\bf 287} (1977), 237--271.


\bibitem{Bruce} J.W. Bruce, P.J. Giblin, C.G. Gibson, {\em Symmetry
sets}, {Proc. Roy. Soc. Edinburgh} {\bf 101A}, (1983), 163--186. 

\bibitem{Damon} J. Damon, {\em Smoothness and geometry of boundaries associated
to skeletal structures I: Sufficient conditions for smoothness},
{Ann. Inst. Fourier}, Grenoble, {\bf 53}, 6 (2003), 1941--1985. 

\bibitem{Damon1} J. Damon, {\em  Determining the Geometry of
Boundaries of Objects from Medial Data},  {International Journal of
Computer Vision} {\bf 63}(1), (2005), 45--64. 

\bibitem{DP}  W. Domitrz, P. Rios, {\em Singularities of equidistants and global centre symmetry sets of Lagrangian 
submanifolds}, Geom. Dedicata {\bf 169}, (2014), 361--382.

    

   

\bibitem{fjr} M. Farnik, Z. Jelonek, M.A.S. Ruas, {\em  Whitney theorem for complex polynomial mappings}, Math. Z. {\bf 295} (2020), 1039--1065.

\bibitem{GH}P.J.Giblin, P.A.Holtom, {\em   The centre symmetry set}, Geometry and Topology of Caustics,
{Banach Center Publications} Vol. {\bf 50,} ed. S. Janeczko and
V.M. Zakalyukin, Warsaw, 1999, 91--105. 

\bibitem{GJ}P.J. Giblin, S. Janeczko, {\em  Geometry of curves and surfaces through the contact map},
{Topol. Appl.} {\bf 159} (2012), 379--380. 

\bibitem{har} R. Hartshorne, {\em Algebraic
Geometry}, Springer Verlag, New York, 1987.



\bibitem{Jan} S. Janeczko, {\em Bifurcations of the Center of Symmetry}, {Geom. Dedicata} {\bf 60},
(1996), 9--16. 

\bibitem{jjr} S. Janeczko, Z. Jelonek, M.A.S. Ruas, {\em Symmetry defect of algebraic varieties}, {\em Asian J. Math.} {\bf 18}, (2014), 525--544.

\bibitem{jel} Z. Jelonek, {\em
On semi-equivalence of generically-finite polynomial mappings}, Math. Z. {\bf 283}, (2016), 133--142.




\bibitem{kl1} S. Kleiman, {\em The enumerative theory of singularities},
	 Real and complex singularities -- Proc. Ninth Nordic Summer School NAVF Sympos. Math., Oslo (1976), 297--396. 
 
	 
\bibitem{math}J.N.  Mather, {\em Generic projections},  Ann. of Math. (2) {\bf 98}, (1973), 226--245.

\bibitem{math2}J.N.  Mather, {\em On Thom-Boardman singularities}, Dynamical Systems Proceedings of a Symposium Held at the University of Bahia, Salvador, Brasil, July 26-August 14, 1971, (1973),  233--248.
	 
\bibitem{mil} J. Milnor, {\em  Singular points of complex hypersurfaces}, Annals of Mathematics Studies, Princeton University Press,  (1968).

           
              








\end{thebibliography}
\end{document}